\newcommand{\ncom}{\newcommand}
\ncom{\ul}{\underline}
\ncom{\beq}{\begin{equation}}
\ncom{\eeq}{\end{equation}}
\ncom{\bea}{\begin{eqnarray*}}
\ncom{\eea}{\end{eqnarray*}}
\ncom{\beqa}{\begin{eqnarray}}
\ncom{\eeqa}{\end{eqnarray}}
\ncom{\nno}{\nonumber}
\ncom{\non}{\nonumber}
\ncom{\ds}{\displaystyle}
\ncom{\half}{\frac{1}{2}}
\ncom{\mbx}{\makebox{.25cm}}
\ncom{\hs}{\mbox{\hspace{.25cm}}}
\ncom{\rar}{\rightarrow}
\ncom{\Rar}{\Rightarrow}
\ncom{\noin}{\noindent}
\ncom{\bc}{\begin{center}}
\ncom{\ec}{\end{center}}
\ncom{\sz}{\scriptsize}
\ncom{\rf}{\ref}
\ncom{\s}{\sqrt{2}}
\ncom{\sgm}{\sigma}
\ncom{\Sgm}{\Sigma}
\ncom{\psgm}{\sigma^{\prime}}
\ncom{\dt}{\delta}
\ncom{\Dt}{\Delta}
\ncom{\lmd}{\lambda}
\ncom{\Lmd}{\Lambda}
\ncom{\Th}{\Theta}
\ncom{\e}{\eta}
\ncom{\eps}{\epsilon}
\ncom{\pcc}{\stackrel{P}{>}}
\ncom{\lp}{\stackrel{L_{p}}{>}}
\ncom{\dist}{{\rm\,dist}}
\ncom{\sspan}{{\rm\,span}}
\ncom{\re}{{\rm Re\,}}
\ncom{\im}{{\rm Im\,}}
\ncom{\sgn}{{\rm sgn\,}}
\ncom{\ba}{\begin{array}}
\ncom{\ea}{\end{array}}
\ncom{\hone}{\mbox{\hspace{1em}}}
\ncom{\htwo}{\mbox{\hspace{2em}}}
\ncom{\hthree}{\mbox{\hspace{3em}}}
\ncom{\hfour}{\mbox{\hspace{4em}}}
\ncom{\vone}{\vskip 2ex}
\ncom{\vtwo}{\vskip 4ex}
\ncom{\vonee}{\vskip 1.5ex}
\ncom{\vthree}{\vskip 6ex}
\ncom{\vfour}{\vspace*{8ex}}
\ncom{\norm}{\|\;\;\|}
\ncom{\integ}[4]{\int_{#1}^{#2}\,{#3}\,d{#4}}
\ncom{\vspan}[1]{{{\rm\,span}\{ #1 \}}}
\ncom{\dm}[1]{ {\displaystyle{#1} } }
\ncom{\ri}[1]{{#1} \index{#1}}
\newtheorem{theorem}{\bf Theorem}[section]
\newtheorem{remark}{\bf Remark}[section]
\newtheorem{proposition}{Proposition}[section]
\newtheoremstyle
    {remarkstyle}
    {}
    {11pt}
    {}
    {}
    {\bfseries}
    {:}
    {     }
    {\thmname{#1} \thmnumber{#2} }
\theoremstyle{remarkstyle}
\begin{document}

\begin{center}
{\Large \bf Tempered Stable Autoregressive Models}
\end{center}
\vone
\vone
\begin{center}
{Niharika Bhootna}$^{\textrm{a}}$, {Arun Kumar}$^{\textrm{a*}}$
\footnotesize{
		$$\begin{tabular}{l}
		$^{\textrm{a}}$ \emph{Department of Mathematics, Indian Institute of Technology Ropar, Rupnagar, Punjab - 140001, India}\\
		$^{\textrm{*}}$ \emph{Correspondence: arun.kumar@iitrpr.ac.in}
\end{tabular}$$}
\end{center}
\vtwo
\begin{center}
\noindent{\bf Abstract}
\end{center}
In this article, we introduce and study a one sided tempered stable first order autoregressive model called TAR(1). Under the assumption of stationarity of the model, the marginal probability density function of the error term is found. It is shown that the distribution of the error term is infinitely divisible. Parameter estimation of the introduced TAR(1) process is done by adopting the conditional least square and method of moments based approach and the performance of the proposed methods are evaluated on simulated data.  Also we study an autoregressive model of order one with tempered stable innovations. Using appropriate test statistic it is shown that the model fit very well on real and simulated data. Our models generalize the inverse Gaussian and one-sided stable autoregressive models existing in the literature.

\vtwo

\noindent{\it Key Words:} Positive tempered stable distribution, autoregressive time-series model, complex inversion of Laplace transforms, estimation, infinite divisibility.
\vone

\noindent{\it MSC:} 62F10, 60G10, 60G12, 44A10

\vone

\section{Introduction}
Autoregressive models serve as eminent approach of modelling among different time-series methods. Many real life time-series data exhibits autoregressive behaviour. Classical autoregressive time series models are based on the assumption of normality of the innovation term. However, this assumption may not be true for all the real life scenarios. Many real life time series probability distributions display  heavy-tailed behaviour or semi-heavy tailed behaviour. Semi-heavy tailed pdf are those where tails are heavier than the Gaussian and lighter than the power law, see e.g. Omeya et al. \cite{Omeya2018}. It is well known that series of counts, proportions, binary outcomes or non-negative observations are some examples of non-normal real life time-series data see e.g. Grunwald et al. \cite{Grunwald1996}. The autoregressive model of order 1 denoted by AR(1) is a simple, useful and interpretable model in a wide range of real life applications.
Several AR(1) models with different marginal distributions are considered in literature. For example, Gaver and Lewis \cite{Gaver1980} considered linear additive first-order autoregressive scheme with gamma distributed marginals. The authors show that error distribution is same as the distribution of a non-negative random variable which has a point mass at 0 and which is exponential if positive (see Gaver and Lewis \cite{Gaver1980}).  A more general exponential ARMA$(p,q)$ model called EARMA$(p,q)$ is considered in Lawrance and Lewis \cite{Lawrance1980}. A second order autoregressive model is considered in Dewald and Lewis \cite{Dewald1985}, where Laplace distributed marginals are considered. Abraham and Balakrishna \cite{Abraham1999} considered inverse Gaussian autoregressive model where the marginals are inverse Gaussian distributed. The authors shows that after fixing a parameter of the inverse Gaussian to 0 the error term is also inverse Gaussian distributed. 

\noindent In this paper, we consider tempered stable autoregressive model of order 1 and denote it by TAR(1). This model generalizes the work discussed in Abraham and Balakrishna \cite{Abraham1999}. Since  one parameter inverse Gaussian (also called L\'evy distribution) is a particular case of the one sided tempered stable distribution. We derive the explicit form of the error density. Further, we show that if the AR(1) series marginals are stable distributed the error is also stable distributed with some scaling. A step-by-step procedure of the estimation of the parameters of the proposed model is given. Moreover, we consider an AR(1) model with one sided tempered stable innovations. This model is different from the TAR(1) model where marginals are one sided tempered stable and here the innovations are positive tempered stable distributed. The application of this model is shown on real life power consumption data. The rest of the paper is organized as follows: Section 2 defines the TAR(1) model where the explicit form of the density of the error term , infinite divisibility, moments and parameter estimation methods of TAR(1) model are discussed. In Section 3, an AR(1) model with positive tempered stable innovations is introduced. The estimation procedure of the parameters of the introduced model is given based on method of moments and conditional least square.  The simulations study, where efficacy of the estimation procedure is discussed based on simulated data and real life application are discussed in this section. The last section concludes.

\section{Tempered Stable Autoregressive Model}
In this section, we introduce the tempered stable autoregressive model and discuss the main properties. Consider an autoregressive process of order $1$, defined as:
\begin{equation}\label{main}
X_n=\rho X_{n-1}+\epsilon_n,
\end{equation}
where $|\rho|<1$ and $\{\epsilon_n, n\geq 1\}$ is sequence of i.i.d. random variables. Assuming marginals $X_n\textsc{\char13}s$ to be stationary positive tempered stable distributed. Then there exist a distribution of $\epsilon_n\textsc{\char13}s$. The class of stable distributions is denoted by $Stable(\beta,\nu,\mu,\sigma)$, with parameter $\beta \in(0,2]$ is the stability index, $\nu \in[-1,1]$ is the skewness parameter, $\mu\in\mathbb{R}$ is the location parameter and $\sigma>0$ is the shape parameter. The stable class probability density
functions do not possess closed form except for three cases (Gaussian, Cauchy, and
L\'evy). Generally stable distributions are represented in terms of their characteristic functions or Laplace transforms. Stable distributions are infinitely divisible. The one-sided stable random variable $S$ has following Laplace transform (see Samorodnitsky and Taqqu \cite{Samorodnitsky1994})
\begin{equation}
\mathbb{E}(e^{-s S}) = e^{- s^{\beta}},\;s>0,\;\beta\in(0,1).
\end{equation}
 The right tail of the $S$ behaves (see e.g. Samorodnitsky and Taqqu \cite{Samorodnitsky1994})
\begin{equation}\label{tail-stable}
\mathbb{P}(S>x) \sim \frac{ x^{-\beta}}{\Gamma{(1-\beta)}},\;\mbox{as}\;x\rightarrow\infty.
\end{equation}
Next, we introduce the positive tempered stable distribution. The positive tempered stable random variable $T$  with tempering parameter $\lambda>0$ and stability index $\beta \in(0,1)$, has the Laplace transform (LT) (see e.g. Meerschaert et al. \cite{Meerschaert2013}, Kumar and Vellaisamy \cite{Kumar2011})
\begin{equation}\label{tem-LT}
\mathbb{E}\left(e^{-s T}\right)=
e^{-\big((s + \lambda)^{\beta}-\lambda^{\beta}\big)}. \end{equation} 
Note that tempered stable distributions are obtained by exponential tempering in the distributions of $\alpha$-stable distributions  (see Rosinski \cite{Rosinski2007}). The advantage of tempered stable distribution over an $\alpha$-stable distribution is that it has moments of all order and its density is also infinitely divisible.  The probability density function for $T$ is given by 
\begin{equation}\label{ts-density}
 f_{\beta, \lambda}(x)= e^{-\lambda x+\lambda^{\beta}} f_{\beta}(x),~~ \lambda>0, \;\beta\in (0,1), 
\end{equation}
where $f_{\beta}(x)$ is the PDF of an $\alpha$-stable distribution (Uchaikin and Zolotarev \cite{Uchaikin1999}). 
The tail probability of tempered stable distribution has the following asymptotic behavior
\begin{align}\label{tail-TSS}
\mathbb{P}(T> x) &\sim c_{\beta,\lambda}\frac{e^{-\lambda x}}{x^{\beta}},\;\mbox{as}\;x\rightarrow \infty,
\end{align}
where $c_{\beta,\lambda} = \frac{1}{\beta\pi}\Gamma(1+\beta)\sin(\pi\beta)e^{\lambda^{\beta}}.$
The first two moments of tempered stable distribution are given by
\begin{equation}\label{moments-tss}
\mathbb{E}(T) = \beta \lambda^{\beta-1}, \;\; \mathbb{E}(T)^2 = \beta(1-\beta) \lambda^{\beta-2} + (\beta \lambda^{\beta-1})^2. 
\end{equation}
A pdf $u(x)$ is called a semi-heavy tailed pdf if
$$
u(x) \sim e^{-cx} v(x),\; c>0,
$$
where $v$ is a regularly varying function (see e.g. Omeya et al. \cite{Omeya2018}). Recall that a positive function $v$ is regularly varying with index $\nu$ if
$$
\lim_{x\rightarrow\infty}\frac{v(dx)}{v(x)} = d^{\nu}, \; d>0.
$$
Using \eqref{tail-TSS}, it is straight forward to show that
$$
f_{\beta, \lambda}(x) \sim \lambda c_{\beta, \lambda}x^{-\beta}e^{-\lambda x},\;\mbox{as}\; x\rightarrow \infty,
$$
and hence the tempered stable density function is semi-heavy tailed.

\subsection{Distributional Properties}
In this subsection, we discuss about the distributional properties related to the introduced TAR(1) process. If each $X_n$ in \eqref{main} is positive tempered stable then the Laplace transform of $X_n$ is given by
\begin{align}\label{LTAR1}
\Phi_{X_n}(s)&=\mathbb{E}(\exp(-s X_n))=\mathbb{E}(\exp(-s (\rho X_{n-1}+\epsilon_n))) = \Phi_{X_{n-1}}(s) \Phi_{\epsilon_n}(s).
\end{align}

\noindent Assuming $X_n\textsc{\char13}s$ to be stationary distributed, it follows

\begin{equation}\label{LT1}
\Phi_{X}(s)=\exp\{-((\lambda+s)^\beta-\lambda^\beta)\}, \hspace{4mm}\lambda>0,\hspace{1mm}\beta \in (0,1),
\end{equation}
and
\begin{equation}\label{LT2}
\Phi_{X}(\rho s)=\exp\{-((\lambda+\rho s)^\beta-\lambda^\beta)\}, \hspace{4mm}\lambda>0,\hspace{1mm}\beta \in (0,1).
\end{equation}

\noindent Using \eqref{LTAR1} 

\begin{equation}
\Phi_{X}(s)= \Phi_{\epsilon}(s)\Phi_{X}(\rho s),
\end{equation}
which implies
\begin{equation}
\Phi_{\epsilon}(s)= \dfrac{\Phi_{X}(s)}{\Phi_{X}(\rho s)}.
\end{equation}
Putting values from \eqref{LT1} and \eqref{LT2}, yields

\begin{equation} \label{der}
\Phi_{\epsilon}(s)= \exp\{(\rho s+\lambda)^\beta-(s+\lambda)^\beta\}.
\end{equation}
\begin{remark}
For $\lambda =0$ in \eqref{der}, we have $\Phi_{\epsilon}(s) = e^{-(1-\rho^{\beta})s^{\beta}}$, which is the Laplace transform of a positive stable random variable $(1-\rho^{\beta})^{1/\beta}S$. Hence, it shows that the if the AR(1) series is stable the error term is also stable.
\end{remark}
\begin{proposition}\label{2.1}
The error distribution with Laplace transform \eqref{der} is infinitely divisible.
\end{proposition}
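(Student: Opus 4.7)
The plan is to establish infinite divisibility by exhibiting a Lévy--Khintchine representation of $-\log \Phi_{\epsilon}$ as the Laplace exponent of a subordinator. Concretely, I want to show
\[
\psi(s) := (s+\lambda)^{\beta} - (\rho s + \lambda)^{\beta} \;=\; \int_0^{\infty} (1-e^{-sx})\,\nu_{\epsilon}(dx)
\]
for a (nonnegative) Lévy measure $\nu_{\epsilon}$ satisfying $\int_0^{\infty}(1\wedge x)\,\nu_{\epsilon}(dx) < \infty$. Since every Bernstein function with $\psi(0)=0$ is the Laplace exponent of a subordinator, and the law of such a subordinator at time $1$ is infinitely divisible, this will be enough.

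The starting point is the known Lévy representation of the positive tempered stable subordinator: for $\mu>0$,
\[
(s+\mu)^{\beta}-\mu^{\beta} \;=\; \int_0^{\infty}(1-e^{-sx})\,\frac{\beta}{\Gamma(1-\beta)}\,x^{-\beta-1}e^{-\mu x}\,dx.
\]
I would write $\psi(s) = \bigl[(s+\lambda)^{\beta}-\lambda^{\beta}\bigr] - \bigl[(\rho s + \lambda)^{\beta}-\lambda^{\beta}\bigr]$, apply the above formula to the first term directly with $\mu=\lambda$, and apply it to the second term after the substitution $y = \rho x$ inside the integral. The change of variables produces
\[
(\rho s + \lambda)^{\beta}-\lambda^{\beta} \;=\; \int_0^{\infty}(1-e^{-sy})\,\frac{\beta}{\Gamma(1-\beta)}\,\rho^{\beta} y^{-\beta-1} e^{-\lambda y/\rho}\,dy,
\]
so subtracting yields the candidate Lévy density
\[
g(x) \;=\; \frac{\beta}{\Gamma(1-\beta)}\,x^{-\beta-1}\bigl[e^{-\lambda x}-\rho^{\beta} e^{-\lambda x/\rho}\bigr], \qquad x>0.
\]

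The main obstacle, and the step that must be handled with care, is verifying that $g(x) \geq 0$ on all of $(0,\infty)$: this is exactly where the structural assumption $|\rho|<1$ must be used. I would rearrange $g(x)\geq 0$ into
\[
\lambda x\Bigl(\tfrac{1}{\rho}-1\Bigr) \;\geq\; \beta\log\rho,
\]
and observe that for $0<\rho<1$ the left-hand side is nonnegative and the right-hand side is strictly negative, so the inequality is automatic. (The sign-convention sketch of the paper implicitly takes $\rho\in(0,1)$ for a one-sided model; if $\rho$ could be negative, I would need to reconsider.)

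Finally, I would check the integrability $\int_0^{\infty}(1\wedge x)\,g(x)\,dx < \infty$ by local and tail analysis: near $0$, a Taylor expansion gives $g(x) \sim (1-\rho^{\beta})\frac{\beta}{\Gamma(1-\beta)} x^{-\beta-1}$, and $x \cdot x^{-\beta-1} = x^{-\beta}$ is integrable at $0$ because $\beta<1$; near infinity, $g(x)$ inherits the exponential decay $e^{-\lambda x}$ from the first term (the second term decays even faster), giving finite mass. Putting these pieces together shows that $\psi$ is a Bernstein function vanishing at $0$, hence $\Phi_{\epsilon}=e^{-\psi}$ is the Laplace transform of an infinitely divisible distribution on $[0,\infty)$, completing the proof.
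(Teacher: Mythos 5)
Your proof is correct, but it takes a genuinely different route from the paper's. The paper invokes Feller's criterion: it writes $\Phi_{\epsilon}(s)=e^{-\phi(s)}$ with $\phi(s)=(s+\lambda)^{\beta}-(\rho s+\lambda)^{\beta}$ and argues that $\phi'(s)=\beta(s+\lambda)^{\beta-1}-\beta\rho(\rho s+\lambda)^{\beta-1}$ is completely monotone, by differentiating each power term $n$ times and checking the sign of the difference term by term (which is where $0<\rho<1$ enters, via $(s+\lambda)^{\beta-1-n}\geq \rho^{n+1}(\rho s+\lambda)^{\beta-1-n}$). You instead exhibit the L\'evy--Khintchine representation explicitly, pushing the known L\'evy density of the tempered stable subordinator through the change of variables $y=\rho x$ and verifying pointwise nonnegativity of the resulting candidate density $\frac{\beta}{\Gamma(1-\beta)}\,x^{-\beta-1}\bigl(e^{-\lambda x}-\rho^{\beta}e^{-\lambda x/\rho}\bigr)$ together with the integrability of $1\wedge x$ against it. The two arguments are essentially dual: by Bernstein's theorem, complete monotonicity of $\phi'$ is equivalent to $\phi'$ being the Laplace transform of a positive measure, and your $g$ is exactly the density whose transform (after multiplying by $x$) that is. What your version buys is a closed-form L\'evy measure for the innovation law, which is strictly more information than the paper extracts, and it makes the role of the hypothesis $\rho\in(0,1)$ completely transparent in the one-line inequality $\lambda x(1/\rho-1)\geq\beta\log\rho$; the paper uses the same restriction but only states $|\rho|<1$ in the model, a point you rightly flag. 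Two minor remarks: your phrase that the tail decay is ``inherited from $e^{-\lambda x}$'' should be adjusted for the boundary case $\lambda=0$, where the factor $x^{-\beta-1}$ alone already gives integrability at infinity; and it is worth saying explicitly that near $0$ the bracket tends to $1-\rho^{\beta}>0$, which is what justifies the asymptotic $g(x)\sim(1-\rho^{\beta})\frac{\beta}{\Gamma(1-\beta)}x^{-\beta-1}$ you use for local integrability.
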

\begin{proof}
By Feller \cite{Feller1971}, a pdf $f(x),\;x \geq 0$ is infinitely divisible iff its Laplace transform is of the form $F(s) = e^{-\phi(s)},\; s > 0$ where $\phi(s)$ has a completely monotone derivative. Further, a function $\psi(\cdot)$ is completely monotone if it possesses derivatives $\psi^{(n)}( \cdot )$ of all orders and $(-1)^n\psi^{(n)}(s)\geq 0,\;s > 0,\;n = 0, 1, \cdots $ (see e.g. Feller\cite{Feller1971}, p. 439). For $\Phi_{\epsilon}(s)$, we have $\phi(s) = (s+\lambda)^{\beta}-(\rho s+\lambda)^{\beta}$ which gives $\phi'(s) = \beta(s+\lambda)^{\beta-1}-\beta\rho(\rho s+\lambda)^{\beta-1}.$ Consider $\psi(s) = (s+\lambda)^{\beta-1}$, which yields to $\psi^{(n)}(s) = (\beta-1)(\beta-2)\cdots (\beta-n)(s+\lambda)^{\beta-n-1}$. Thus $(-1)^n\psi^{(n)}(s) \geq 0,\;n=0,1,2,\cdots.$ Similarly, considering $\zeta(s)=(\rho s+\lambda)^{\beta-1}$ we can show that $\zeta(s)$ is completely monotone. Moreover, $(-1)^n\phi^n(s)=(-1)^n\psi^n(s)-(-1)^n\zeta^n(s)$ and $\psi^n(s)$ is always greater than $\zeta^n(s)$ as $0<\rho<1$, this implies $(-1)^n\phi^n(s)\geq 0$. So $\phi(s)$ has completely monotone derivative.
\end{proof}

\noindent Next, we provide the explicit form of the pdf of the error term using complex inversion formula, which generalize the corresponding results in Abraham and Balakrishna \cite{Abraham1999}.
\begin{theorem}
For the stationary tempered stable autoregressive model defined in \eqref{main} with Laplace transform defined as
\begin{equation}\label{tem-LT1}
\mathbb{E}\left(e^{-s X_n}\right)=
e^{-\big((s + \lambda)^{\beta}-\lambda^{\beta}\big)}, \end{equation}
the pdf of the innovation term has following integral form

\begin{align}\label{error_density}
g(x)&=\frac{1}{2\pi i}\int_{0}^{\infty}  \Bigg( e^{-\lambda x/\rho} e^{-wx} e^{\rho^\beta w^\beta e^{\iota \pi \beta}- (we^{\iota \pi}+\lambda-\lambda/\rho)^\beta} 
-e^{-\lambda x/\rho} e^{-wx} e^{\rho^\beta w^\beta e^{-\iota \pi \beta}- (we^{-\iota \pi}+\lambda-\lambda/\rho)^\beta}\Bigg) dw\nonumber\\
&+\frac{1}{2\pi i}\int_{0}^{(\lambda/\rho)-\lambda} \Bigg(e^{-\lambda x} e^{-wx} e^{(-\lambda \rho+\rho we^{\iota\pi}+\lambda)^\beta-(we^{\iota \pi})^\beta}
-e^{-\lambda x} e^{-wx} e^{(-\lambda \rho+\rho we^{-\iota\pi}+\lambda)^\beta-(we^{-\iota \pi})^\beta}\Bigg)dw,
\end{align}
where $\lambda>0$ and $\beta\in (0,1)$.

\end{theorem}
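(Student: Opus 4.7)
The plan is to recover $g$ from its Laplace transform $\Phi_\epsilon(s) = \exp\{(\rho s+\lambda)^\beta-(s+\lambda)^\beta\}$ derived in (\ref{der}) by applying the Bromwich complex inversion formula
\[ g(x)=\frac{1}{2\pi i}\int_{c-i\infty}^{c+i\infty} e^{sx}\,\Phi_\epsilon(s)\,ds,\qquad x>0, \]
for any $c>0$, and then deforming the vertical line into a keyhole contour that wraps the branch cuts of $\Phi_\epsilon$. The first step is to locate the branch points: $(s+\lambda)^\beta$ is branched at $s=-\lambda$ and $(\rho s+\lambda)^\beta$ at $s=-\lambda/\rho$, and since $\rho\in(0,1)$ and $\lambda>0$ one has $-\lambda/\rho<-\lambda<0$. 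I would place cuts along $(-\infty,-\lambda]$ and $(-\infty,-\lambda/\rho]$ with principal-branch conventions, so that $\Phi_\epsilon$ is analytic on the remaining slit plane.

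Next I would close the Bromwich line to the left with a circular arc of radius $R$ together with a keyhole hugging the two cuts from above and below. On the arc, $|e^{sx}|=e^{x\,\mathrm{Re}(s)}$ decays exponentially as $\mathrm{Re}(s)\to-\infty$, while a direct estimate yields $|\Phi_\epsilon(s)|\leq e^{C|s|^\beta}$; since $\beta<1$, the decay of $e^{sx}$ dominates the stretched-exponential growth of $|\Phi_\epsilon|$, and a Jordan-type argument shows the arc contribution vanishes as $R\to\infty$. By Cauchy's theorem, $g(x)$ is then $1/(2\pi i)$ times the integrated net jump of $e^{sx}\Phi_\epsilon(s)$ across the two edges of the cuts. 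This Jordan estimate, together with the verification that the principal-branch choices are consistent with $\Phi_\epsilon$ as defined on $\mathrm{Re}(s)>0$, is the technical heart of the argument and is where I expect the main difficulty to lie.

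The remainder of the proof is the bookkeeping of edge values on the cuts, which I would split according to the local branch structure. On $(-\lambda/\rho,-\lambda)$ only $(s+\lambda)^\beta$ has a jump, since $\rho s+\lambda=\lambda(1-\rho)-\rho w>0$ remains positive and analytic; the substitution $s=-\lambda-w$ with $w\in(0,\lambda/\rho-\lambda)$ replaces $-w$ by $we^{\pm i\pi}$ on the two edges, and together with $e^{sx}=e^{-\lambda x}e^{-wx}$ this reproduces the second integral in (\ref{error_density}). On $(-\infty,-\lambda/\rho)$ both factors jump; the substitution $s=-\lambda/\rho-w$ with $w\in(0,\infty)$ yields $\rho s+\lambda=-\rho w$ and $s+\lambda=-(w+\lambda/\rho-\lambda)$, whose upper- and lower-edge limits carry the factors $e^{\pm i\pi}$, and with $e^{sx}=e^{-\lambda x/\rho}e^{-wx}$ this reproduces the first integral in (\ref{error_density}). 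Subtracting the lower-edge integrand from the upper-edge integrand on each segment and dividing by $2\pi i$ assembles the stated representation of $g(x)$, recovering the inverse-Gaussian formula of Abraham and Balakrishna \cite{Abraham1999} as the limiting case $\beta=1/2$.
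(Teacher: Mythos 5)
Your proposal is correct and follows essentially the same route as the paper: Bromwich inversion of $\Phi_\epsilon$ from \eqref{der}, a double-keyhole contour around the branch points $s=-\lambda$ and $s=-\lambda/\rho$, vanishing of the large arcs and small circles, and the substitutions $s=-\lambda+we^{\pm\iota\pi}$ and $s=-\lambda/\rho+we^{\pm\iota\pi}$ on the cut edges. Your explicit observation that only $(s+\lambda)^{\beta}$ jumps on $(-\lambda/\rho,-\lambda)$ while both factors jump on $(-\infty,-\lambda/\rho)$, and your growth estimate $|\Phi_\epsilon(s)|\le e^{C|s|^{\beta}}$ justifying the arc decay, are slightly more detailed than the paper's appeal to Schiff's Lemma 4.1, but the argument is the same.
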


\begin{proof}
Inverse Laplace transform corresponding to $\Phi_\epsilon(s)$ provides the density of innovation term. The density for innovation can be computed using complex inversion formula for Laplace transform given by
\begin{equation}\label{contour}
g(x) = \frac{1}{2\pi i}\int_{x_{0}-i\infty}^{x_{0}+i\infty}e^{sx} \Phi_{\epsilon}(s)ds,
\end{equation}
where point $x_0>a$ for some $a$ is taken in such a way that the integrand is analytic for $\mathcal{R}e(s)>a.$  Note that the integrand $e^{sx} \Phi_{\epsilon}(s)$ is an exponential function which is analytic in whole complex plane. However, due to fractional power in the exponent the integrand $e^{sx} \Phi_{\epsilon}(s)$ has branch points at $s=-\lambda$ and $s=-\lambda/\rho$. Thus we take a branch cut along the non-positive real axis and consider a (single-valued) analytic branch of the integrand. To calculate \eqref{contour}, consider a closed double-key-hole contour $C$: ABCDEFGHIJA (Fig. 1) with branch points at $O = (-\lambda, 0)$ and $O' = (-\lambda/\rho, 0)$. The contour consists of following segments: AB and IJ are arcs of radius $R$, BC, DE, FG and HI are line segment parallel to $x$-axis, CD, EF and GH are arcs of circles with radius $r$ and JA is the line segment from $x_{0}-iy$ to $x_{0}+iy$ (see Fig. 1). The integrand is analytic within and on the contour $C$ so that by Cauchy’s residue theorem (see e.g. Schiff \cite{Schiff1999}) 
\begin{equation}\label{resdue}
\frac{1}{2\pi i}\int_{C}e^{sx}\Phi_{\epsilon}(s)ds = 0.
\end{equation}
\begin{figure}[ht!]
  \centering
    \includegraphics[width=0.8\linewidth]{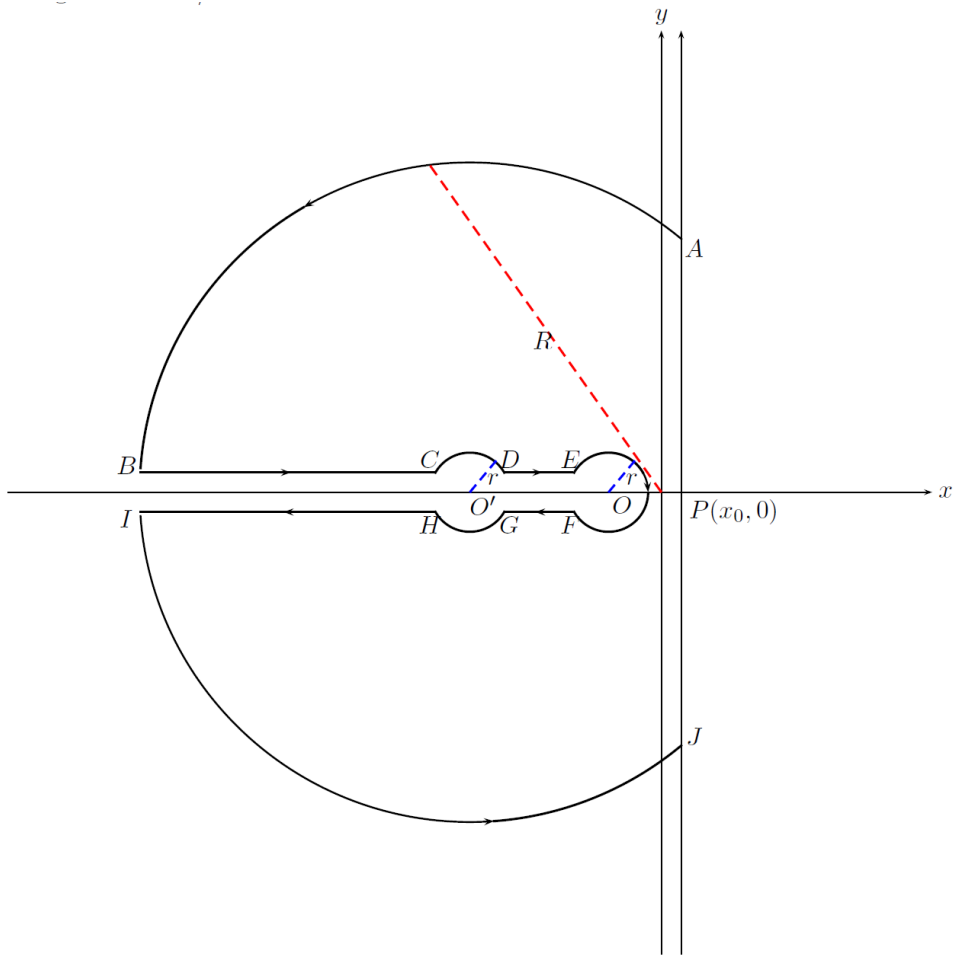}
    \caption{The double key hole contour}
\end{figure}

\noindent Using (Lemma $4.1$ Schiff \cite{Schiff1999}, p. 154), the integral on circular arcs AB and IJ tend to 0 as $R\rightarrow\infty$.  The integral over CD, EF  and GH are also zero as $r$ goes to infinity. Thus, as $r\rightarrow 0$ and $R\rightarrow \infty$, we have

\begin{align}\label{1}
\frac{1}{2\pi i}\int_{x_{0}-i\infty}^{x_{0}+i\infty}e^{sx} \Phi_{\epsilon}(s) &= -\frac{1}{2\pi i}\int_{BC}e^{sx}\Phi_{\epsilon}(s)ds - \frac{1}{2\pi i}\int_{DE}e^{sx}\Phi_{\epsilon}(s)ds\nonumber\\
&\hspace{1cm}-\frac{1}{2\pi i}\int_{FG}e^{sx}\Phi_{\epsilon}(s)ds-\int_{HI}e^{sx}\Phi_{\epsilon}(s)ds.
\end{align}

\noindent Along $BC$, put $s=-\dfrac{\lambda}{\rho}+w e^{\iota \pi}$, which implies $ds=-dw$, and hence
\begin{equation}\label{2}
\int_{BC}e^{sx}\exp((\rho s+\lambda)^\beta-(s+\lambda)^\beta))ds= \int_{r}^{R-\lambda/\rho}   e^{-\lambda x/\rho} e^{-wx} e^{\rho^\beta w^\beta e^{\iota \pi \beta}- (we^{\iota \pi}+\lambda-\lambda/\rho)^\beta} dw.
\end{equation} 

\noindent Along $HI$, $s=-\dfrac{\lambda}{\rho}+w e^{-\iota \pi}$ and $ds=-dw$
\begin{equation}\label{3}
\int_{HI}e^{sx}\exp((\rho s+\lambda)^\beta-(s+\lambda)^\beta))ds=- \int_{r}^{R-\lambda/\rho}   e^{-\lambda x/\rho} e^{-wx} e^{\rho^\beta w^\beta e^{-\iota \pi \beta}- (we^{-\iota \pi}+\lambda-\lambda/\rho)^\beta} dw.
\end{equation} 

\noindent Along $DE$, $s=-\lambda+w e^{\iota \pi}$ and $ds=-dw$
\begin{equation}\label{4}
\int_{DE}e^{sx}\exp((\rho s+\lambda)^\beta-(s+\lambda)^\beta))ds= \int_{r}^{-r+(\lambda/\rho)-\lambda} e^{-\lambda x} e^{-wx} e^{(-\lambda \rho+\rho we^{\iota\pi}+\lambda)^\beta-(we^{\iota \pi})^\beta} dw.
\end{equation}

\noindent Along $FG$, \hspace{2mm}$s=-\lambda+w e^{-\iota \pi}$ and $ds=-dw$
\begin{equation}\label{5}
\int_{DE}e^{sx}\exp((\rho s+\lambda)^\beta-(s+\lambda)^\beta))ds=- \int_{r}^{-r+(\lambda/\rho)-\lambda}   e^{-\lambda x} e^{-wx} e^{(-\lambda \rho+\rho we^{-\iota\pi}+\lambda)^\beta-(we^{-\iota \pi})^\beta} dw
\end{equation}
\noindent Substituting \eqref{2}, \eqref{3}, \eqref{4} and \eqref{5} in  \eqref{1}, the result follows as $r\rightarrow 0$ and $R\rightarrow \infty.$ 
\end{proof}

\begin{remark} For the special case $\lambda=0$ and $\beta=\dfrac{1}{2}$, \eqref{error_density} reduces to
\begin{equation}
g(x)= \Bigg(\dfrac{1-\sqrt{\rho}}{\sqrt 2}\Bigg) \Bigg(\dfrac{1}{2\pi x^3}\Bigg)^{1/2} \exp\Bigg(\frac{(1-\sqrt{\rho})^2}{4x}\Bigg),\hspace{2mm} |\rho| <1,
\end{equation}
and is called L\'evy probability density function see Applebaum \cite{Applebaum2004}, which is a special case of the inverse Gaussian density. Abraham and Balakrishna \cite{Abraham1999} show that if the inverse Gaussian autoregressive model is stationary the error term is also inverse Gaussian distributed. Our results complement their findings. 
\end{remark}
\begin{remark}
For $\lambda =0,$ the error density function in \eqref{error_density} reduces to
\begin{equation}\label{tail_index}
g(x) = \frac{1}{\pi}\int_{0}^{\infty}e^{-wx}e^{w^{\beta}(1-\rho^{\beta})\cos(\pi\beta)}\sin\left(w^{\beta}(1-\rho^{\beta})\sin(\pi\beta)\right),\; x>0.
\end{equation}
\end{remark}

\begin{proposition}\label{2.2}
For $\lambda=0$ the innovation term has the following fractional $qth$ order moment 
\begin{equation}
\mathbb{E}(\epsilon^q)=\frac{\Gamma(1-q/\beta)}{\Gamma(1-q)}(1-\rho^{\beta})^{q/\beta}, \; 0<q<\beta<1.
\end{equation}
\end{proposition}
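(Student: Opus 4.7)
The plan is to reduce the claim to the standard fractional-moment formula for a one-sided stable law. By the remark following \eqref{der}, setting $\lambda=0$ in the Laplace transform of $\epsilon$ yields $\Phi_\epsilon(s) = \exp(-(1-\rho^\beta) s^\beta)$, which is the Laplace transform of $(1-\rho^\beta)^{1/\beta} S$, where $S$ is the one-sided $\beta$-stable random variable with $\mathbb{E}(e^{-sS}) = e^{-s^\beta}$. Uniqueness of the Laplace transform gives $\epsilon \stackrel{d}{=} (1-\rho^\beta)^{1/\beta} S$, and hence
$$\mathbb{E}(\epsilon^q) = (1-\rho^\beta)^{q/\beta}\, \mathbb{E}(S^q).$$
It therefore suffices to prove that $\mathbb{E}(S^q) = \Gamma(1-q/\beta)/\Gamma(1-q)$ for $0<q<\beta$.

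For this I would apply the classical fractional-power representation
$$x^q = \frac{q}{\Gamma(1-q)}\int_0^\infty \bigl(1-e^{-tx}\bigr)\, t^{-q-1}\, dt, \qquad x>0,\ 0<q<1,$$
to $x = S$, swap the order of integration (Tonelli, since the integrand is nonnegative), and substitute the Laplace transform of $S$ to obtain
$$\mathbb{E}(S^q) = \frac{q}{\Gamma(1-q)}\int_0^\infty \bigl(1-e^{-t^\beta}\bigr)\, t^{-q-1}\, dt.$$
The change of variable $u = t^\beta$ converts this into $\tfrac{q}{\beta\,\Gamma(1-q)}\int_0^\infty (1-e^{-u})\,u^{-q/\beta-1}\,du$, and reapplying the same identity at $x=1$ with exponent $q/\beta \in (0,1)$ evaluates the remaining integral as $\Gamma(1-q/\beta)/(q/\beta)$. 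The constants collapse to give $\mathbb{E}(S^q) = \Gamma(1-q/\beta)/\Gamma(1-q)$, and multiplying by $(1-\rho^\beta)^{q/\beta}$ yields the claimed formula.

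There is no real obstacle in this argument; it is essentially bookkeeping of constants. The hypothesis $0<q<\beta$ is used at two places, namely to make $q/\beta < 1$ so that the second invocation of the fractional-power identity is legal, and to guarantee convergence of the Mellin integrals near zero and at infinity. The restriction is sharp: by \eqref{tail-stable}, $\mathbb{E}(S^q) = \infty$ whenever $q \geq \beta$, so extending beyond this range is not possible.
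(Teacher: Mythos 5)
Your argument is correct. The reduction $\epsilon \stackrel{d}{=} (1-\rho^\beta)^{1/\beta}S$ via uniqueness of the Laplace transform is sound, the identity $x^q = \frac{q}{\Gamma(1-q)}\int_0^\infty (1-e^{-tx})\,t^{-q-1}\,dt$ is valid for $0<q<1$, Tonelli justifies the interchange, and the bookkeeping with the substitution $u=t^\beta$ lands on the stated constant. The condition $q<\beta$ is used exactly where you say it is, and your remark on sharpness via \eqref{tail-stable} is a nice addition the paper does not make.

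The route differs from the paper's in two respects, though the difference is smaller than it may appear. First, the paper does not perform the scaling reduction to $S$; it works directly with $\Phi_\epsilon(s)=e^{(\rho s+\lambda)^\beta-(s+\lambda)^\beta}$ for general $\lambda$, writes the general formula $\mathbb{E}(X^q)=\frac{(-1)^n}{\Gamma(n-q)}\int_0^\infty \frac{d^n}{ds^n}[\phi(s)]\,s^{n-q-1}\,ds$ for $q\in(n-1,n)$, specializes to $n=1$, and only then sets $\lambda=0$ to evaluate the resulting integral in closed form (via the same substitution $s^\beta=u$). So the paper's proof passes through an intermediate integral representation of $\mathbb{E}(\epsilon^q)$ valid for all $\lambda\ge 0$, which your proof skips. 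Second, your starting identity $x^q=\frac{q}{\Gamma(1-q)}\int_0^\infty(1-e^{-tx})t^{-q-1}dt$ is, after one integration by parts in $t$, exactly the paper's $n=1$ formula $\mathbb{E}(X^q)=\frac{-1}{\Gamma(1-q)}\int_0^\infty \phi'(s)\,s^{-q}\,ds$; the two are the same Mellin-type device in different clothing. Your version has the modest technical advantage that the integrand $(1-e^{-tS})t^{-q-1}$ is nonnegative, so the interchange of expectation and integral is immediate by Tonelli, whereas the paper's interchange of $\frac{d}{ds}$ with the expectation deserves (and does not receive) a word of justification. The paper's version has the advantage of yielding, as a byproduct, an explicit integral formula for $\mathbb{E}(\epsilon^q)$ in the tempered case $\lambda>0$, which your scaling argument cannot produce since the tempered law is not a pure scaling of $S$.
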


\begin{proof}
For a positive random variable $X$ with a Laplace transform $\phi(s)$, and for $p>0$, it follows
\begin{align*}
\int_{0}^{\infty} \dfrac{d^n}{ds^n}[\phi(s)] s^{p-1}&= \int_{0}^{\infty} \dfrac{d^n}{ds^n}\left[\mathbb{E}e^{-sX}\right] s^{p-1} ds =\mathbb{E}\int_{0}^{\infty} \dfrac{d^n}{ds^n}\left[e^{-sX}\right] s^{p-1} ds\\
&=(-1)^n\mathbb{E}\Bigg[\int_{0}^{\infty} X^n e^{-uX} u^{p-1} ds\Bigg] =(-1)^n \Gamma(p)\mathbb{E}(X^{n-p}).
\end{align*}
Thus for $q \in (n-1,n)$, for an integer $n$, we have
\begin{align*}
\mathbb{E}(X^q)&=\dfrac{(-1)^n}{\Gamma(n-q)}\int_{0}^{\infty} \dfrac{d^n}{ds^n}\left[\phi(s)\right] s^{n-q-1} ds.
\end{align*}
\noindent Using the relationship for $q \in (0,1)$, it follows
\begin{align*}
\mathbb{E}(\epsilon^q)&=\dfrac{-1}{\Gamma(1-q)}\int_{0}^{\infty} \dfrac{d}{ds}\left[e^{(\rho s+\lambda)^\beta-(s+\lambda)^\beta)}\right] s^{-q} ds\\
&=\dfrac{1}{\Gamma(1-q)}\int_{0}^{\infty} e^{(\rho s+\lambda)^\beta-(s+\lambda)^\beta)}\left((s+\lambda)^{\beta-1}-\rho\beta(\rho s+\lambda)^{\beta-1}\right) s^{-q} ds,
\end{align*}
which gives the $q$th order fractional moment of the innovation term.
\noindent For $\lambda=0$, it can be written in the explicit form as follows
\begin{align*}
\mathbb{E}(\epsilon^q)&=\dfrac{1}{\Gamma(1-q)}\int_{0}^{\infty} e^{-(1-\rho^\beta) s^\beta} s^{\beta-q-1} \beta(1-\rho^\beta) ds\\
&= \dfrac{1}{\Gamma(1-q)}\int_{0}^{\infty} e^{-(1-\rho^\beta) u} u^{-q/\beta}(1-\rho^\beta)  du\;\;\; \mbox{(put $s^{\beta} =u$)}\\
&=\dfrac{(1-\rho^\beta)}{\Gamma(1-q)}\dfrac{\Gamma(1-q/\beta)}{(1-\rho^\beta)^{(1-q/\beta)}} = \frac{\Gamma(1-q/\beta)}{\Gamma(1-q)}(1-\rho^{\beta})^{q/\beta},\;0<q<\beta.
\end{align*}
\end{proof}

\begin{proposition}
For the stationary TAR(1) model defined in \eqref{main} the autocorrelation for $r$th lag $\rho_r$ has the following recursive form
\begin{equation}
    \rho_r=\rho^r.
\end{equation}
\end{proposition}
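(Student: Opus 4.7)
The plan is to exploit the standard AR(1) backward-substitution trick together with stationarity and the independence of the innovation $\epsilon_n$ from the past. Since the tempered stable distribution has moments of all orders (as noted after \eqref{tem-LT}), the variance $\sigma_X^2 := \mathrm{Var}(X_n)$ is finite and, by stationarity, independent of $n$, so the autocorrelation $\rho_r = \mathrm{Cov}(X_n, X_{n+r})/\sigma_X^2$ is well-defined.

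First I would iterate the defining recursion \eqref{main}: writing
\[
X_{n+r} = \rho X_{n+r-1} + \epsilon_{n+r} = \rho^2 X_{n+r-2} + \rho \epsilon_{n+r-1} + \epsilon_{n+r} = \cdots = \rho^r X_n + \sum_{k=0}^{r-1} \rho^{k} \epsilon_{n+r-k}.
\]
Since $\{\epsilon_n\}$ is an i.i.d. sequence and $X_n$ depends only on $\{\epsilon_j : j \leq n\}$, each $\epsilon_{n+j}$ with $j \geq 1$ is independent of $X_n$. Consequently $\mathrm{Cov}(X_n, \epsilon_{n+j}) = 0$ for $j \geq 1$, and taking covariance with $X_n$ on both sides yields
\[
\mathrm{Cov}(X_n, X_{n+r}) = \rho^r \, \mathrm{Var}(X_n) = \rho^r \sigma_X^2.
\]

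Dividing by $\sigma_X^2$ and invoking stationarity gives $\rho_r = \rho^r$, as required. The only nontrivial point is the independence of $\epsilon_{n+j}$ from $X_n$ for $j \geq 1$, which follows from the i.i.d. assumption on the innovations together with the fact that, in a stationary solution of \eqref{main}, $X_n$ is a measurable function of $\{\epsilon_j : j \leq n\}$ (obtained by iterating \eqref{main} backwards and using $|\rho|<1$ to ensure convergence). No serious obstacle arises; the argument is purely structural and does not depend on the specific tempered stable marginal beyond the existence of a second moment.
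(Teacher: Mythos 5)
Your proof is correct, but it takes a different (and arguably cleaner) route than the paper. The paper multiplies the recursion $X_n=\rho X_{n-1}+\epsilon_n$ by $X_{n-r}$, takes raw expectations, and then must explicitly invoke the tempered stable moment formulas $\mathbb{E}(\epsilon_n)=(1-\rho)\beta\lambda^{\beta-1}$ and $\mathbb{E}(X_n)=\beta\lambda^{\beta-1}$ to show that the leftover mean terms cancel; this yields the one-lag recursion $\rho_r=\rho\,\rho_{r-1}$, which is then iterated. You instead substitute backwards $r$ times to get $X_{n+r}=\rho^r X_n+\sum_{k=0}^{r-1}\rho^k\epsilon_{n+r-k}$ and apply covariance directly, so the shift-invariance and bilinearity of covariance make the mean terms disappear automatically and the answer drops out in one step. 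What your approach buys is that it is distribution-free: it needs only stationarity, a finite second moment, and the independence of future innovations from $X_n$ (which you correctly justify via the causal representation $X_n=\sum_{i\ge 0}\rho^i\epsilon_{n-i}$ for $|\rho|<1$), whereas the paper's cancellation step is tied to the specific tempered stable means. What the paper's route buys is that it exhibits the Yule--Walker-style recursion $\rho_r=\rho\,\rho_{r-1}$ explicitly, which is the ``recursive form'' advertised in the statement. Both arguments are sound.
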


\begin{proof}
For the given TAR(1) process $X_n=\rho X_{n-1}+\epsilon_n$, multiplying both the side by $X_{n-r}$, we get
\begin{equation}
    X_n X_{n-r}=\rho X_{n-1}X_{n-r}+\epsilon_n X_{n-r}.
\end{equation}
Here $X_n$ is independent of $\epsilon_j$ for $j>n$. Taking expectation on both the sides, yields

\begin{align*}
    \mathbb{E}( X_n X_{n-r})&=\rho\mathbb{E}(X_{n-1}X_{n-r})+\mathbb{E}(\epsilon_n X_{n-r}),
\end{align*}
which gives
\begin{align*}
    \mathbb{E}( X_n X_{n-r})-\mathbb{E}(X_n)\mathbb{E}(X_{n-r})&=\rho\mathbb{E}( X_{n-1}X_{n-r})-\rho\mathbb{E}(X_{n-1})\mathbb{E}(X_{n-r})+\rho\mathbb{E}(X_{n-1})\mathbb{E}(X_{n-r})\\
    &-\mathbb{E}(X_n)\mathbb{E}(X_{n-r})+\mathbb{E}(\epsilon_n)\mathbb{E}(X_{n-r}).
\end{align*}
Thus,
\begin{align*}
\mathrm{Cov}(X_n, X_{n-r})&=\rho\mathrm{Cov}( X_{n-1}X_{n-r})+\mathbb{E}(X_{n-1})\mathbb{E}(X_{n-r})(\rho-1)+\mathbb{E}(\epsilon_n)\mathbb{E}(X_{n-r})\\
&= \rho\mathrm{Cov}( X_{n-1}X_{n-r})+\mathbb{E}(X_{n-r})(\mathbb{E}(X_n)(\rho-1)+\mathbb{E}(\epsilon_n)).
\end{align*}
\noindent Since $\mathbb{E}(\epsilon_n)=(1-\rho)\beta\lambda^{\beta-1}$ and $\mathbb{E}(X_n)=\beta\lambda^{\beta-1}$, it follows
\begin{align*}
    \mathrm{Cov}(X_n, X_{n-r})&=\rho\mathrm{Cov}(X_{n-1},X_{n-r})+\mathbb{E}(X_{n-r})(\beta\lambda^{\beta-1}(\rho-1)+(1-\rho)\beta\lambda^{\beta-1})\\
    &=\rho\mathrm{Cov}(X_{n-1},X_{n-r}).
\end{align*}
\noindent Dividing both side by ${\sqrt{\mathrm{Var}(X_{n}){\rm Var}(X_{n-r})}}$, yields
\begin{align*}
\dfrac{\mathrm{Cov}(X_nX_{n-r})}{\sqrt{\mathrm{Var}(X_n)Var(X_{n-r})}}=\dfrac{\mathrm{Cov}(X_{n-1}X_{n-r})}{\sqrt{\mathrm{Var}(X_{n-1}){\rm Var}(X_{n-r})}},
\end{align*}
Using, $\mathrm{Var(X_n)}$=$\mathrm{Var(X_{n-1})}$=$\mathrm{Var(X_{n-r})}$, leads to
\begin{equation*}
\rho_r=\rho\rho_{r-1} \implies \rho_r=\rho^r,
\end{equation*}
since $\rho_0=1$ and $\rho_1=\rho$.

\end{proof}

\subsection{Parameter Estimation}
Here, we discuss the estimation procedure of the parameters of the tempered stable autoregressive model. We consider conditional least square and moments based estimation methods.\\

\noindent {\bf Parameter estimation by conditional least square:} The conditional least square method provides a straightforward procedure to estimate the parameters for dependent observations by minimizing the sum of square of deviations about conditional expectation, see e.g. Klimko and Nelson \cite{Klimko1978}. To estimate the parameter $\rho$ of process \eqref{main} the conditional least square (CLS) method is used. The CLS estimator of parameter vector $\theta=(\rho, \lambda, \beta)$ is obtained by minimizing 

 \begin{equation}\label{CLS1}
 Q_n(\theta)=\sum_{i=1}^{n}\left[X_{i+1}-\mathbb{E}(X_{i+1}|X_i,X_{i-1},\cdots,X_1)\right]^2,
 \end{equation}
 with respect to $\theta$. For an AR(1) sequence due to Markovian property
\begin{equation}
\mathbb{E}(X_{i+1}|X_i,X_{i-1},\cdots,X_1)=\mathbb{E}(X_{i+1}|X_i),
\end{equation}
which yields
\begin{equation}\label{CLS2}
\mathbb{E}(X_{i+1}|X_i)=\rho X_i+\mathbb{E}(\epsilon_i).
\end{equation}

\noindent To calculate the first moment of error term, taking $\left.\dfrac{d\epsilon_i}{ds}\right\vert_{s=0}$, gives
\begin{equation}\label{CLS3}
	\mathbb{E}(\epsilon_i)=\beta \lambda^{\beta-1}(1-\rho).
\end{equation}
Using the equations \eqref{CLS1}, \eqref{CLS2} and \eqref{CLS3} we can write
\begin{equation}\label{min}
 Q_n(\theta)= \sum_{i=1}^{n}\left[X_{i+1}-\rho X_i-(\rho-1)\beta\lambda^{\beta-1}\right]^2.
\end{equation}

\noindent Minimising \eqref{min} w.r.t $\rho, \beta,$ and $\lambda$ gives the following estimate for $\rho$ and an estimating equation for $\beta$ and $\lambda$.

\begin{equation}\label{rho_estimate}
\hat{\rho}=\dfrac {\sum_{i=1}^{n}(X_i-\bar{X}) (X_{i+1}-\bar{X})}{\sum_{i=1}^{n}(X_i-\bar{X})^2},
\end{equation}
and
\begin{equation}\label{Eq_beta_lambda}
\beta \lambda^{\beta-1}=\dfrac{\sum_{i=1}^{n}(X_{i+1}-\hat{\rho}X_i)}{n(1-\hat{\rho})}.
\end{equation}
The estimate $\hat{\rho}$ can be easily calculated from the observed sample using \eqref{rho_estimate}. However, some numerical method which minimizes the squared difference between left and right hand sides is required to estimate $\beta$ and $\lambda$ from \eqref{Eq_beta_lambda}.\\

\noindent{\bf Moments based estimation:} The parameters of positive tempered stable distribution cannot be estimated by classical maximum likelihood estimation method due to non availability of closed from probability density function. Also the CLS estimator provides an estimate for $\rho$ only. Here we use method of moments to estimate the parameters $\beta$ and $\lambda$. The general idea used by Pearson \cite{Pearson1894} is, the moments of the PDF about the origin are equal to the corresponding moments of the sample data. In case of TAR(1) process moments for innovation distribution can be extracted using the corresponding Laplace transform as the density \eqref{error_density} is in complex form. The idea is to apply the method of moments on the error sample which can be easily obtained from the original series by first estimating the $\rho$ using CLS method. The $k$th order moment can be extracted from taking $k$th derivative of Laplace transform as follows for the innovation term.

\begin{align*}
\mathbb{E}[\epsilon^k]=(-1)^k \dfrac{d^k}{ds^k}\phi_{\epsilon}(s),\;k \in \mathbb{N}.
\end{align*}
\vspace{0.5mm}

\noindent Using the Laplace transform defined in \eqref{der}, we have
\begin{align}\label{pop_mean}
\mathbb{E}(\epsilon_i)=\left.\dfrac{d}{ds}\phi_{\epsilon}(s)\right\vert_{s=0}&=\beta\lambda^{\beta-1}(1-\rho).
\end{align}
\begin{align}\label{pop_mom2}
\mathbb{E}(\epsilon_i^2)=\left.\dfrac{d^2}{ds^2}\phi_{\epsilon}(s)\right\vert_{s=0}&=(\beta\lambda^{\beta-1}(\rho-1))^2+\beta(\beta-1)\lambda^{\beta-2}(\rho^2-1).
\end{align}


\noindent The sample estimates for $\mathbb{E}(\epsilon_i)$ and $\mathbb{E}(\epsilon_i^2)$ are given by
\begin{equation}\label{mom3}
m_1=\dfrac{\sum_{i=1}^{n}\epsilon_i}{n}=\dfrac{\sum_{i=1}^{n}(X_{i+1}-\rho X_i)}{n}
\end{equation}

\begin{equation}\label{mom4}
m_2=\dfrac{\sum_{i=1}^{n}\epsilon_i^2}{n}=\dfrac{\sum_{i=1}^{n}(X_{i+1}-\rho X_i)^2}{n}.
\end{equation}

\vspace{2mm}
\noindent Letting, $z=\dfrac{\sum_{i=1}^{n}(X_{i+1}-\rho X_i)}{n}$ and $r=\dfrac{\sum_{i=1}^{n}(X_{i+1}-\rho X_i)^2}{n}$ and using \eqref{pop_mean}, \eqref{pop_mom2}, \eqref{mom3} and \eqref{mom4} we get the following relations

\begin{align}
\beta\lambda^{\beta-1}=\dfrac{z}{1-\rho},\;\;\; \beta =1+\dfrac{(z^2-r)\lambda}{z(1+\rho)}.
\end{align}

\noindent After some manipulation, the above two equations leads to following nonlinear equations

\begin{align*}
\beta\Bigg(\dfrac{\beta-1}{d}\Bigg)^{(\beta-1)}-\Bigg(\dfrac{z}{1-\rho}\Bigg)&=0\\
(1+d\lambda)\lambda^{d\lambda}-\Bigg(\dfrac{z}{1-\rho}\Bigg)=0
\end{align*}

\noindent We use sequential least square optimization technique (SLSQP) to solve these equations which is is an iterative method to optimize a nonlinear model using the nonlinear least square model and sequential quadratic programming model (see Fu et al. \cite{Fu2019}). We use scipy.optimize which use SLSQP as an inbuilt method in python to solve nonlinear equations that gives the estimates for $\beta$ and $\lambda$. 
\vspace{2mm}

\noindent {\bf Parameter estimation by tail index:} The parameter estimation for the case $\lambda=0$ (corresponding density given by \eqref{tail_index}) is done by using the right tail behaviour of $S$ for the TAR(1) process. For the TAR(1) process we have a  positive stable random variable $(1-\rho^{\beta})^{1/\beta}S$ with corresponding Laplace transform $e^{-(1-\rho^{\beta})s^{\beta}}$. Using \eqref{tail-stable}, it follows 
\begin{align}\label{tail_error}
    \mathbb{P}((1-\rho^\beta)^{1/\beta}S>x)&=\mathbb{P}(S> (1-\rho^\beta)^{-1/\beta}x)\sim \frac{ ((1-\rho^\beta)^{-1/\beta} x)^{-\beta}}{\Gamma{(1-\beta)}},\;\mbox{as}\;x\rightarrow\infty.
\end{align}

\noindent Taking $\log$ on both sides of \eqref{tail_error} and replacing $\mathbb{P}(S> (1-\rho^\beta)^{-1/\beta}x)= G_\epsilon(x)$, where $G_\epsilon(x)$ represents the empirical tail for $\epsilon$, we get
\begin{equation}\label{tail_stable}
    \log(G_\epsilon(x))=-\beta\log(x)+\log(1-\rho^\beta)-\log(\Gamma(1-\beta)),\;0<\beta<1.
\end{equation}

\noindent Considering $Y=\log(G_\epsilon(x))$ and $X=-\log(x)$, which yields $\beta$ as slope of the regressing $Y$ on $X$.

\subsection{Simulation Study}
To check the efficacy of the estimation procedure we perform simulation. Here we have considered an AR$(1)$ model with marginals to be distributed as one sided tempered stable distribution and the corresponding Laplace transform for error term $\epsilon_n$ is given in \eqref{der}. In order to simulate a tempered stable AR(1) series, first we simulate data for innovation using the Laplace transform, see e.g. Ridout (2008). Ridout \cite{Ridout2008} discussed the simulation of a continuous distribution function using its Laplace transform if the density of the distribution is not in closed form. 

We consider three different simulated datasets of length $n=10,000$ each with three different parameter combinations. For the first case consider, $\rho=0.9$, $\lambda=1$ and $\beta=0.5$ and the obtained estimates are $\hat{\rho}=0.895$, $\hat{\lambda}=1.13.$ and $\hat{\beta}=0.489$. 
For the second case we consider, $\rho=0.8$, $\lambda=2$ and $\beta=0.7$ and the obtained estimates are $\hat{\rho}=0.795$, $\hat{\lambda}=1.93$ and $\hat{\beta}=0.709$.
At last $\rho=0.75$, $\lambda=1.5$ and $\beta=0.9$ are taken and the obtained estimates are $\hat{\rho}=0.755$, $\hat{\lambda}=1.601$ and $\hat{\beta}=0.90$. The results are summarized in Table 1.

\begin{table}[ht!]
\begin{center}
\begin{tabular}{||c||c||c|}
\hline 
 & Actual & Estimated \\ 
\hline 
Case 1 & $\rho=0.9$, $\lambda=1$, $\beta=0.5$&$\hat{\rho}=0.895$, $\hat{\lambda}=1.13$, $\hat{\beta}=0.489$\\ 
\hline 
Case 2&  $\rho=0.8$, $\lambda=2$, $\beta=0.7$ & $\hat{\rho}=0.795$, $\hat{\lambda}=1.93$, $\hat{\beta}=0.70$\\ 
\hline
Case 3&  $\rho=0.75$, $\lambda=1.5$, $\beta=0.9$ &  $\hat{\rho}=0.755$, $\hat{\lambda}=1.60$, $\hat{\beta}=0.90$\\ 
\hline 
\end{tabular}
\caption{Actual and estimated parameters values for different choices of parameters}
\end{center}
\end{table}

 \noindent The sample trajectory for an AR$(1)$ process for positive tempered stable distribution with parameters $\rho=0.9$, $\lambda=2$ and for $\beta=0.5$ is given in Fig. 2.
\begin{figure}[ht!]%
    \centering
    {\includegraphics[width=16cm, height=7cm]{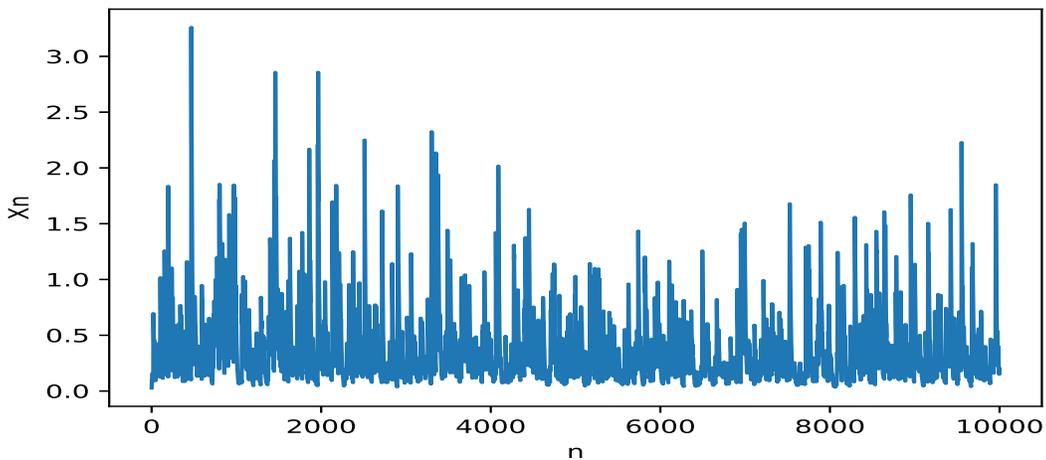} }%
    \caption{Sample path of TAR(1) process}%
\end{figure}

\noindent Next we use resampling method to generate several samples of the simulated data. To ensure the stationarity here we assume AR$(1)$ process with parameter $\rho < |1|$, as the AR$(1)$ process is stationary if $|\rho|< 1$. Consider $\rho=0.9$ and take tempered stable distribution parameters as $\beta=0.5$ and $\lambda=2$, by iterative resampling of simulated data we generate $1000$ simulated samples for the defined parameters. The parameters are estimated using method of moments for $\lambda$, $\beta$ and based on conditional least square for $\rho$ considering different samples. The corresponding box-plots for estimated parameter values are shown in Fig. 3.

\begin{figure}[ht!]%
    \centering
    {\includegraphics[width=14cm, height =7cm]{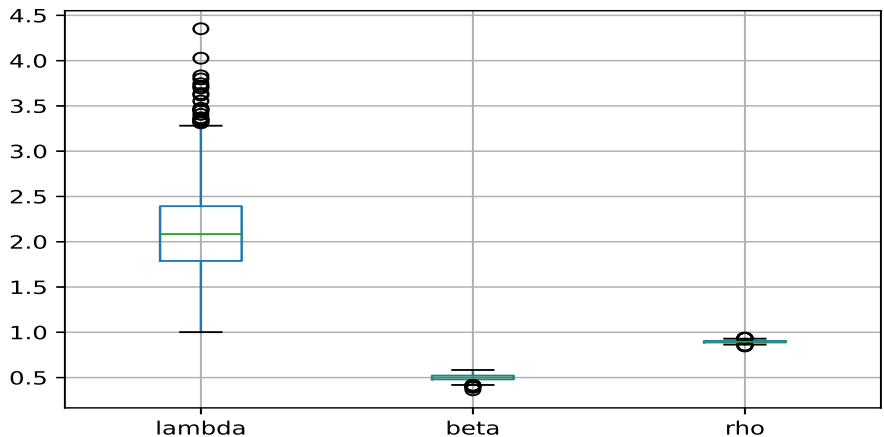} }%
    \caption{Box-plot of estimated parameters of TAR process using method of moments}%
\end{figure}

\noindent For the particular case, $\lambda=0$ which leads to one-sided positive stable autoregressive model, simulation is done by the method given in Ridout \cite{Ridout2008} and parameter estimation is done by the tail index method using \eqref{tail_stable}. We simulate a sample of $10,000$ observations for $\rho=0.5$ and $\beta=0.5$. By taking values greater than a particular threshold such that we have some percentage of the sorted dataset greater than that threshold value, we get regression based estimator for $\beta$. Taking a cutoff value of $0.009$ gives the $30\%$ of observation greater than the cutoff and gives the corresponding estimate of $0.36$ for $\beta$. For cutoff value $0.019$, we get almost $20\%$ of the observation greater than threshold value and corresponding estimate is $0.41$ for $\beta$ which is more close to true parameter value. Further, for $k=0.06$ the estimated $\beta$ is 0.49 which is even more closer to the actual value. This clearly indicates that the accuracy of the estimated parameter depend on the threshold selected. The results are summarized in Table 2, where $k$ represents the threshold value.

\begin{table}[ht!]
\begin{center}
\begin{tabular}{||c||c||c||c|}
\hline 
 & Threshold & Actual $\beta$ & Estimated $\beta$ \\ 
\hline 
Case 1 & k=0.009 & 0.5 & 0.36\\ 
\hline 
Case 2& k=0.019 & 0.5 & 0.41\\
\hline
Case 3& k=0.06 & 0.5 & 0.49 \\
\hline 
\end{tabular}
\caption{Estimated parameter values based on tail index for $\lambda=0$}
\end{center}
\end{table}

\section{AR(1) Process with Tempered Stable Innovations}
Consider the stationary AR(1) process
\begin{equation}\label{eqn1}
X_n=\rho X_{n-1}+\epsilon_n,\;\;|\rho|<1,
\end{equation}
\noindent where $\epsilon_n\textsc{\char13}s$ are i.i.d positive tempered stable random variables defined in \eqref{tem-LT} and independent of $X_{n-1}$. Let $\phi_{X_n}(s)$ and $\phi_{\epsilon_n}(s)$ are Laplace transform of $X_n$ and $\epsilon_n$ respectively. Now assuming $X_0\overset{\mathrm{d}}{=}\epsilon_0$ and recursively writing equation $\eqref{eqn1}$, we get a moving average representation of AR(1) process as follows

\begin{equation}
X_n=\sum_{i=0}^{n}\rho^i\epsilon_{n-i},\; \lambda>0, \;\beta\in (0,1).
\end{equation}
The Laplace transform of $X_n$ is
\begin{align}\label{eqn2}
\phi_{X_n}(s)=\prod_{i=0}^{n}\phi_{\epsilon}(\rho^is) = \prod_{i=0}^{n}e^{\lambda^\beta-(\lambda+\rho^i s)^{\beta}}
=e^{n\lambda^\beta-\sum_{i=0}^{n}(\lambda+\rho^i s)^{\beta}}.
\end{align}




\begin{proposition}
For $\lambda=0$, the $qth$ order moment of $X_n$ is given by
\begin{equation}
\mathbb{E}(X_n^q)=\frac{\Gamma(1-q/\beta)}{\Gamma(1-q)}(\sum_{i=0}^{n}\rho^{i\beta})^{q/\beta}, \; 0<q<\beta<1.
\end{equation}

\end{proposition}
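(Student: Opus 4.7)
The plan is to specialize the Laplace transform computed in \eqref{eqn2} to $\lambda=0$ and then mimic exactly the integral-representation argument used in Proposition \ref{2.2} to extract fractional moments from a Laplace transform. Setting $\lambda=0$ in \eqref{eqn2} yields
\begin{equation*}
\phi_{X_n}(s)=\exp\Bigl(-\sum_{i=0}^{n}(\rho^{i}s)^{\beta}\Bigr)=\exp\bigl(-c\, s^{\beta}\bigr),\qquad c:=\sum_{i=0}^{n}\rho^{i\beta},
\end{equation*}
so that $X_n$ is (up to the positive scale factor $c^{1/\beta}$) a one-sided stable random variable of index $\beta$.

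Next I would apply the general identity derived inside the proof of Proposition \ref{2.2}, namely that for a nonnegative random variable $X$ with Laplace transform $\phi$ and $q\in(0,1)$,
\begin{equation*}
\mathbb{E}(X^{q})=\frac{-1}{\Gamma(1-q)}\int_{0}^{\infty}\phi'(s)\,s^{-q}\,ds.
\end{equation*}
Differentiating gives $\phi_{X_n}'(s)=-c\beta s^{\beta-1}e^{-cs^{\beta}}$, so
\begin{equation*}
\mathbb{E}(X_n^{q})=\frac{c\beta}{\Gamma(1-q)}\int_{0}^{\infty}s^{\beta-q-1}e^{-cs^{\beta}}\,ds.
\end{equation*}

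The remaining step is the substitution $u=s^{\beta}$, which turns the integral into a Gamma integral of the form $\int_{0}^{\infty}u^{-q/\beta}e^{-cu}\,du=c^{q/\beta-1}\Gamma(1-q/\beta)$, valid because $0<q<\beta<1$ guarantees $-1<-q/\beta<0$ so both the integrability at $0$ and at $\infty$ are clear. Collecting factors leaves
\begin{equation*}
\mathbb{E}(X_n^{q})=\frac{\Gamma(1-q/\beta)}{\Gamma(1-q)}\,c^{q/\beta}=\frac{\Gamma(1-q/\beta)}{\Gamma(1-q)}\Bigl(\sum_{i=0}^{n}\rho^{i\beta}\Bigr)^{q/\beta}.
\end{equation*}

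There is no real obstacle here; the entire calculation reduces to recognizing that at $\lambda=0$ the product in \eqref{eqn2} collapses to a single stable-type exponential, after which the fractional-moment machinery of Proposition \ref{2.2} applies verbatim. The only thing to be careful about is verifying the range $0<q<\beta<1$ is exactly what is needed to justify both the differentiation under the integral sign in the preliminary identity and the convergence of the resulting Gamma integral, so I would state these constraints explicitly when invoking the formulas.
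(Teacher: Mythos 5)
Your proposal is correct and follows essentially the same route as the paper: both invoke the fractional-moment identity $\mathbb{E}(X^{q})=\frac{-1}{\Gamma(1-q)}\int_{0}^{\infty}\phi'(s)s^{-q}\,ds$ from the proof of Proposition \ref{2.2}, specialize the Laplace transform \eqref{eqn2} to $\lambda=0$, and reduce to a Gamma integral via the substitution $u=s^{\beta}$. The only cosmetic difference is that you set $\lambda=0$ before differentiating whereas the paper differentiates the general expression first; your explicit attention to the convergence condition $0<q<\beta<1$ is a small improvement in rigor.
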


\begin{proof}
Using the proposition \eqref{2.2}, for a positive random variable $X$ with corresponding Laplace transform $\phi(s)$, and for $p>0$, it follows
\begin{align*}
\int_{0}^{\infty} \dfrac{d^m}{ds^m}[\phi(s)] s^{p-1}=(-1)^n \Gamma(p)\mathbb{E}(X^{n-p}).
\end{align*}
Thus for $q \in (m-1,m)$, for an integer $m$, we have
\begin{align*}
\mathbb{E}(X^q)&=\dfrac{(-1)^m}{\Gamma(n-q)}\int_{0}^{\infty} \dfrac{d^m}{ds^m}\left[\phi(s)\right] s^{n-q-1} ds.
\end{align*}
\noindent For $q \in (0,1)$, it follows
\begin{align*}
\mathbb{E}(X_n^q)&=\dfrac{-1}{\Gamma(1-q)}\int_{0}^{\infty} \dfrac{d}{ds}\left[e^{n\lambda^\beta-\sum_{i=0}^{n}(\rho^is+\lambda)^\beta}\right] s^{-q} ds\\
&=\dfrac{1}{\Gamma(1-q)}\int_{0}^{\infty} e^{n\lambda^\beta-\sum_{i=0}^{n}(\rho^is+\lambda)^\beta}\sum_{i=0}^{n}\beta \rho^i(\rho^is+\lambda)^{\beta-1} s^{-q} ds.
\end{align*}
\noindent For $\lambda=0$, it yields
\begin{align*}
\mathbb{E}(X_n^q)&=\dfrac{1}{\Gamma(1-q)}\int_{0}^{\infty} e^{\sum_{i=0}^{n}\rho^{i\beta} s^\beta} s^{\beta-q-1} \beta(\sum_{i=0}^{n}\rho^{i\beta}) ds\\
&= \dfrac{1}{\Gamma(1-q)}\int_{0}^{\infty} e^{-\sum_{i=0}^{n}\rho^{i\beta}u} u^{-q/\beta}\beta(\sum_{i=0}^{n}\rho^{i\beta})  du\;\;\; \mbox{(put $s^{\beta} =u$)}\\
&=\dfrac{\sum_{i=0}^{n}\rho^{i\beta} }{\Gamma(1-q)}\dfrac{\Gamma(1-q/\beta)}{(\sum_{i=0}^{n}\rho^{i\beta})^{(1-q/\beta)}} = \frac{\Gamma(1-q/\beta)}{\Gamma(1-q)}(\sum_{i=0}^{n}\rho^{i\beta})^{q/\beta},\;0<q<\beta.
\end{align*}
\end{proof}

\begin{remark}
Using same argument as in Prop. \ref{2.1}, we show that the marginal distributions of $X_n$ are infinitely divisible. It is easy to show that for $e^{n\lambda^\beta-\sum_{i=0}^{n}(\lambda+\rho^i s)^{\beta}}$, the exponent $\phi(s)= \sum_{i=0}^{n}(\lambda+\rho^i s)^{\beta} - n\lambda^\beta$ has completely monotone derivative, that is $(-1)^n \phi^n(s)\geq 0,\;n=0,1,2,\cdots.$
\end{remark}

\begin{remark}
The index of dispersion for $X_n$ is  $ID(X_n)=\dfrac{\mathrm{Var}(X_n)}{\mathbb{E}(X_n)}$. Using the mean and variance of the process $X_n$, given by $\mathbb{E}(X_n)=\dfrac{\beta \lambda^{\beta-1}}{1-\rho}$ and $ \mathrm{Var}(X_n)=\dfrac{\beta (1-\beta)\lambda^{(\beta-2)}}{1-\rho^2}$, yields $ID(X_n)=\dfrac{1-\beta}{\lambda(1+\rho)}$, where $0<\rho<1, \beta \in (0,1)$ and $\lambda>0$. For $\lambda>1$ the $ID(X_n)<1$, which implies that the marginal distributions of $X_n$ are under-dispersed for $\lambda>1$.
\end{remark}

\subsection{Parameter Estimation}
To estimate the parameter $\rho$ of the process $\eqref{eqn1}$, the  conditional least square method discussed in previous section is used, which minimize 
\begin{equation}
 Q_n(\theta)=\sum_{i=1}^{n}\left[X_{i+1}-\mathbb{E}(X_{i+1}|X_i,X_{i-1},\cdots,X_1)\right]^2,
 \end{equation}
with respect to $\theta$. Using the equations \eqref{CLS1}, \eqref{CLS2} and mean of $\epsilon_n$, we can write

\begin{equation}\label{min1}
 Q_n(\theta)= \sum_{i=1}^{n}\left[X_{i+1}-\rho X_i-\beta\lambda^{\beta-1}\right]^2.
\end{equation}

\noindent Minimizing \eqref{min1} w.r.t $\rho$ gives the following estimate
\begin{equation}\label{rho_estimate1}
\hat{\rho}=\dfrac {\sum_{i=1}^{n}(X_i-\bar{X}) (X_{i+1}-\bar{X})}{\sum_{i=1}^{n}(X_i-\bar{X})^2}.
\end{equation}

\noindent The parameter estimation for $\beta$ and $\lambda$ is done again using method of moments for $\epsilon_n$, which leads to
\begin{align}
m_1=\beta\lambda^{\beta-1},\;\;m_2=(\beta\lambda^{\beta-1})^2+\beta(1-\beta)\lambda^{\beta-2},
\end{align}
where $m_1$ and $m_2$ are first and second order sample moments for $\epsilon_n$ respectively. After some manipulation, we get the following nonlinear equations
\begin{align}
\beta{(c(\beta-1))}^{(\beta-1)}-m_1=0,\;\;\;
\Bigg(\dfrac{\lambda}{c}+1\Bigg)\lambda^{(\lambda/c)+1}-m_1=0,
\end{align}
\noindent where $c=\dfrac{m_1}{m_2^2-m1}$. Using the optimization technique SLSQP, discussed in Section 3, these equations are solved in python to obtain the estimates $\hat{\lambda}$ and $\hat{\beta}$. The estimate of $\lambda$ can also be obtained using the estimate $\hat{\beta}$, denoted by
\begin{equation}
\hat{\lambda}=\Bigg(\dfrac{m_1}{\hat{\beta}}\Bigg)^{\dfrac{1}{\hat{\beta}-1}}-m_1=0
\end{equation}

\subsection{Simulation Study and Real Data Application}
To check the performance of the estimation method, we use simulated dataset. Using the method discussed in Ridout \cite{Ridout2008}, we simulate an independent sample of size 10,000 for error term using the Laplace transform of positive tempered stable random variable defined in \eqref{tem-LT}. Then we recursively simulate AR(1) series by using fixed value of $\rho$ and letting $\epsilon_0=X_0$. The performance of the estimation method for three different sets of parameters is summarized in Table 3.
The sample trajectory for AR(1) process with tempered stable innovations is given in Fig. 4. Further, to assess the performance of the estimation method, box plots of the estimated parameters $\hat{\rho},\hat{\beta}$ and $\hat{\lambda}$ are plotted by taking multiple simulated random samples of innovations and generating multiple AR(1) time-series. Here we generate $1000$ samples each of length$n=10,000$ for $\rho=0.9, \beta=0.5$ and $\lambda=2$ and estimate parameters for each sample. The box plot of estimated parameters is shown in Fig. 5. 


\begin{table}[ht!]
\begin{center}
\begin{tabular}{||c||c||c|}
\hline 
 & Actual & Estimated \\ 
\hline 
Case 1 & $\rho=0.9$, $\lambda=1$, $\beta=0.5$&$\hat{\rho}=0.8909$, $\hat{\lambda}=1.01$, $\hat{\beta}=0.5002$\\ 
\hline 
Case 2&  $\rho=0.8$, $\lambda=2$, $\beta=0.7$ & $\hat{\rho}=0.804$, $\hat{\lambda}=2.037$, $\hat{\beta}=0.69$\\ 
\hline
Case 3&  $\rho=0.75$, $\lambda=1.5$, $\beta=0.9$ &  $\hat{\rho}=0.75131$, $\hat{\lambda}=1.56$, $\hat{\beta}=0.89$\\ 
\hline 
\end{tabular}
\caption{Actual and estimated parameters values for different choices of parameters}
\end{center}
\end{table}

\begin{figure}[ht!]%
    \centering
    {\includegraphics[width=16cm, height=7cm]{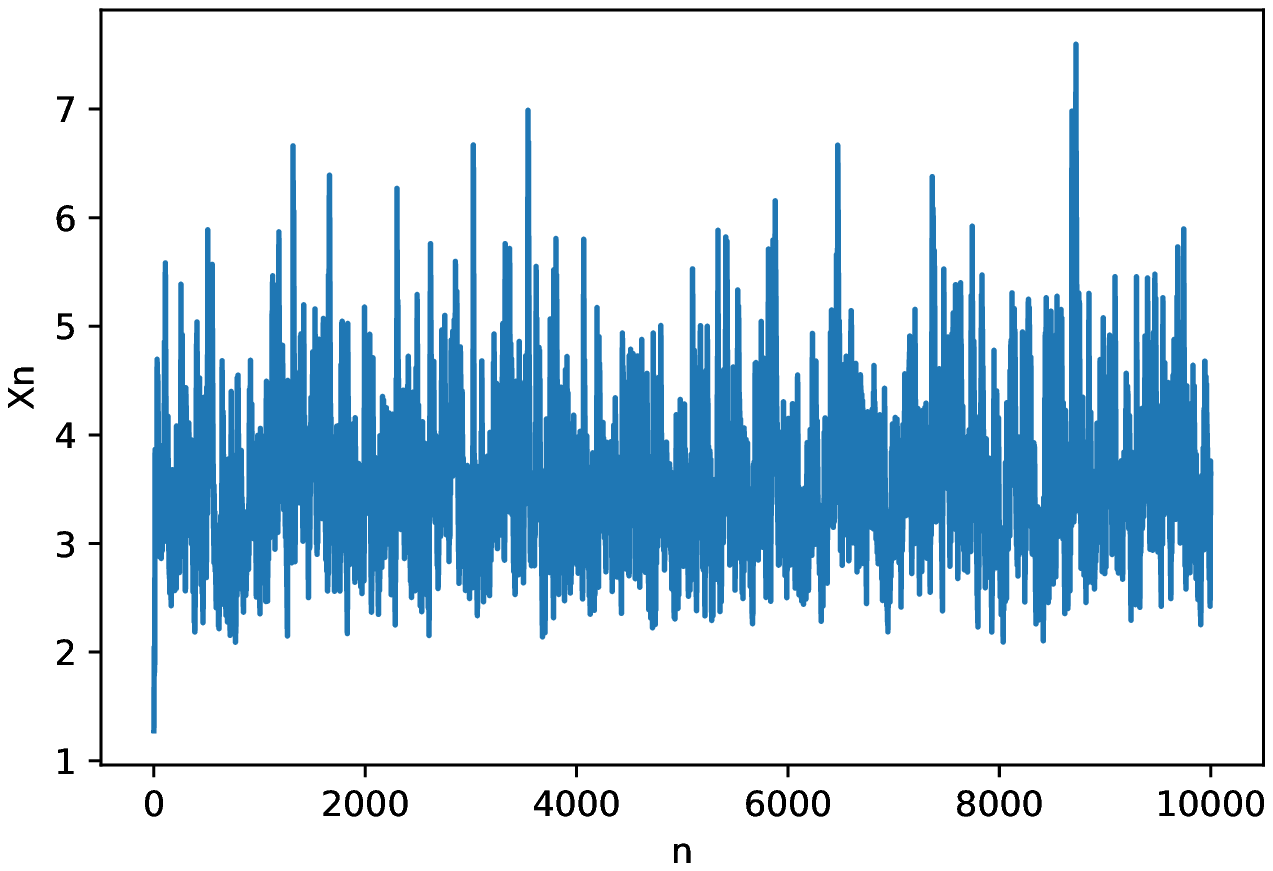} }%
    \caption{Sample path of AR(1) process with tempered stable innovations}%
\end{figure}

\begin{figure}[ht!]%
    \centering
    {\includegraphics[width=16cm, height=7cm]{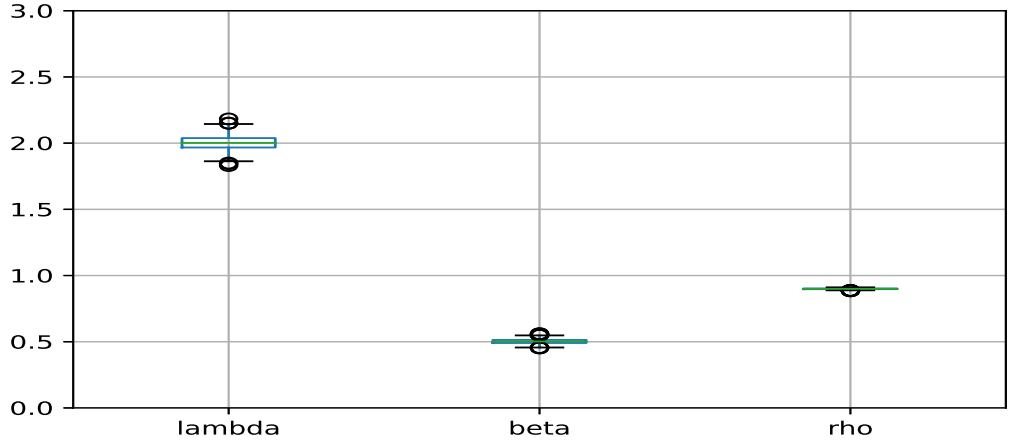} }%
    \caption{Sample path of AR(1) process with tempered stable innovations}%
\end{figure}

\noindent {\bf Real Data Application:} The power consumption data during the time of COVID-19 is extracted from Kaggle. The data contains the state-wise power consumption in India during the time period 2nd Jan 2019 till 23rd May 2020 that is for a period of 17 months and each data-point represents the power consumption at that time in Mega Units (MU). Here we are working on the data during the period 14th April 2019 till 29th April 2020 for the Indian state Arunachal Pradesh. The sample path of the data is plotted in Fig. 6.

\begin{figure}[ht!]%
    \centering
    {\includegraphics[width=14cm, height=7cm]{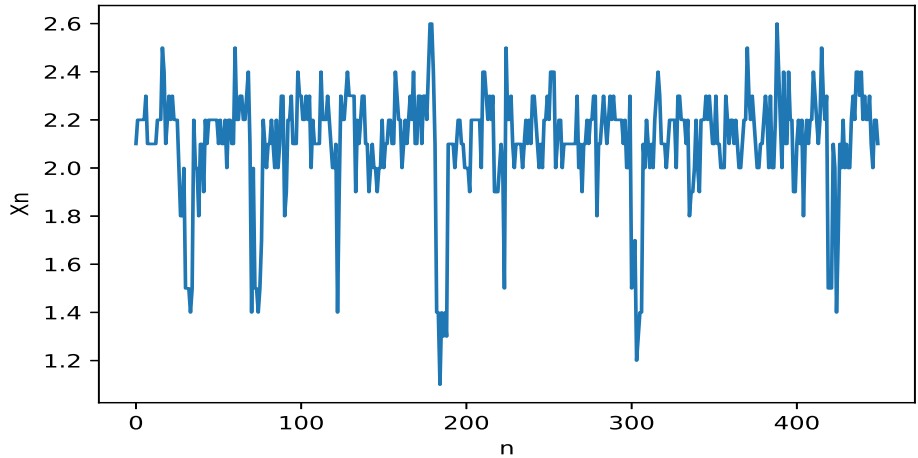} }%
    \caption{The power consumption time-series of Arunachal Pradesh}%
\end{figure}

\noindent By simply looking at the data we can visualize the stationarity with some sharp spikes suggesting non-Gaussian behaviour. We have applied Augmented Dickey–Fuller (ADF) test to check the stationarity of the data and the $p$-value comes out to be less than 0.05 which indicates that the null hypothesis of ADF test is rejected at 5\% significance level and implies that the data is stationary. To determine the appropriate time series model, ACF and PACF plots are given in Fig. 7 and Fig. 8 respectively which determines the appropriate time series model with significant lag. 
\begin{figure}[ht!]%
    \centering
    {\includegraphics[width=16cm, height=7cm]{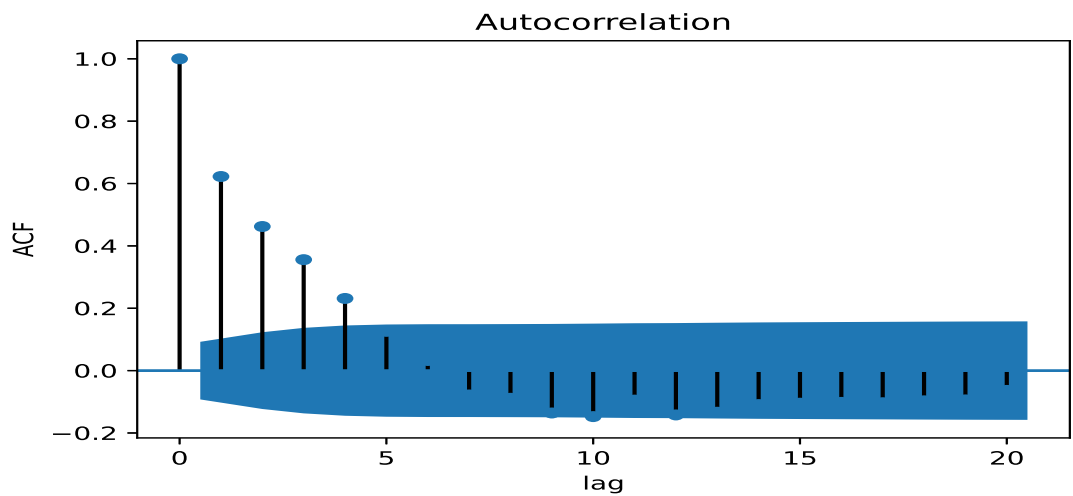} }%
    \caption{ACF plot of data.}%
\end{figure}

\begin{figure}[ht!]%
    \centering
    {\includegraphics[width=16cm, height=7cm]{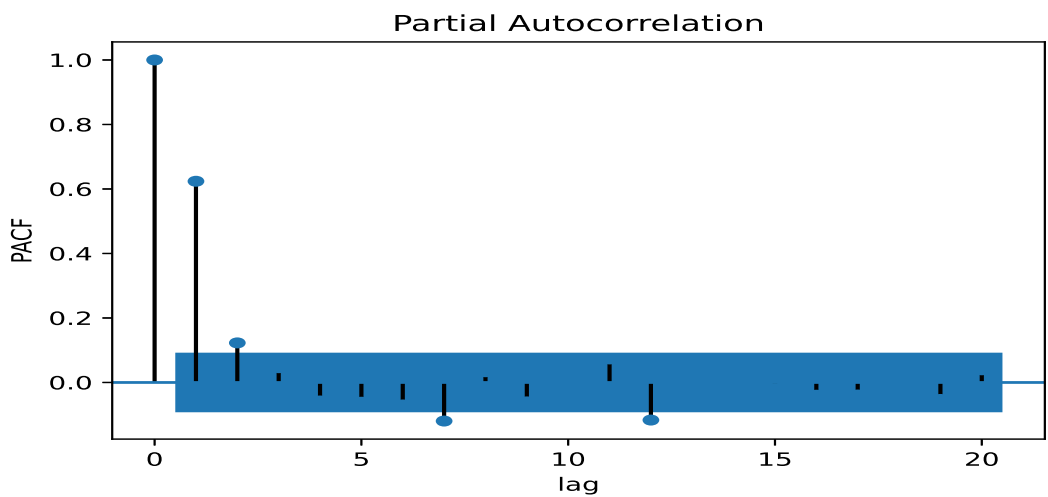} }%
    \caption{PACF plot of data.}%
\end{figure}

\noindent Clearly PACF plot for the dataset is significant till lag $1$ and the ACF plots tails off, this means that AR(1) model would be a good fit for this data. We assume that the innovation term $\epsilon_n$ follows one sided tempered stable distribution. We fit the proposed AR(1) model with tempered stable distribution on the dataset. The estimated parameters using the above described methods are $\hat{\beta}=0.91, \hat{\lambda}=2.9$ and $\hat{\rho}=0.60$. Using these estimated values, we simulate a dataset to assess the accuracy of the results. We simulate a data sample for $\hat{\lambda}=2.8$ and $\hat{\beta}=0.92$. Considering the relation $\epsilon_n=X_n-\rho X_{n-1}$ generates a dataset for the innovation term. To check whether both the simulated data using estimated parameters values and the actual data set follow same distribution or not we use two sample Kolmogorov–Smirnov (K-S) test and Mann–Whitney $U$ test. The two sample K-S and Mann–Whitney $U$ test both are non parametric tests which compare the distribution of two datasets. The two sample K-S test gives a $p$-value 0.056 and Mann–Whitney $U$ test gives the $p$-value 0.377. Thus the null hypothesis for both the tests are accepted at $5\%$ level of significance. This indicates that both the samples are from same distributions. Hence it is appropriate to model the considered power consumption time-series using AR(1) model with positive tempered stable innovations.

\section{Conclusions} We introduced two kinds of tempered stable autoregressive models. The first model denoted by TAR(1) has one-sided tempered stable marginals. The second model discussed in Section 3 has one-sided tempered stable innovations. Autoregressive models with positively distributed innovations are quite common in literature for example inverse Gaussian, Gamma and one-sided stable are used. The autoregressive model with one-sided tempered stable innovations generalizes the inverse Gaussian and positive stable innovations based autoregressive models. We provide the parameter estimation method of the introduced model and assess the efficacy using simulated data. Further, we show the applicability of the introduced model on real life power consumption data. We hope that introduced models will be helpful in modeling of several real life series emerging from different areas like finance, economics, natural hazards, power consumption and others.  \\

\noindent {\bf Acknowledgments:} N.B. would like to thank Ministry of Education (MoE), India for supporting her PhD research. Further, A.K. would like to express his gratitude to Science and Engineering Research Board (SERB), India for the financial support under the MATRICS research grant MTR/2019/000286.
\vone
\noindent

\end{document}